\documentclass
{amsart}
\usepackage{amssymb,amsmath,amsfonts,amsthm,amscd,enumerate}
\usepackage[inline]{enumitem}

\usepackage{graphics}
\usepackage{mathrsfs}
\usepackage{color}

\input xy
\xyoption{all}


\newcommand{\CC}{\mathbb C}




\theoremstyle{definition}
\newtheorem{definition}{Definition}[section]

\theoremstyle{plain}
\newtheorem{theo}[definition]{Theorem}
\newtheorem{prop}[definition]{Proposition}
\newtheorem{lem}[definition]{Lemma}
\newtheorem{cor}[definition]{Corollary}

\theoremstyle{remark}
\newtheorem{remark}[definition]{Remark}
\newtheorem{eg}[definition]{Example}

\newcommand{\R}{\mathbb R}
\newcommand{\C}{\mathbb C}
\newcommand{\Rtil}{\widetilde \R}
\newcommand{\Ctil}{\widetilde \C}
\newcommand{\N}{\mathbb N}

\newcommand{\eps}{\varepsilon}

\newcommand{\D}{\mathcal{D}}
\newcommand{\cinfty}{{\mathcal C}^\infty}
\newcommand{\vphi}{\varphi}
\newcommand{\comp}{\Subset}
\newcommand{\gs}{{\mathcal G}}
\newcommand{\ns}{{\mathcal N}}
\newcommand{\esm}{{\mathcal E}_M}

\newcommand{\Om}{\Omega}
\newcommand{\otilc}{\widetilde \Omega_c}
\newcommand{\obull}{\Omega^\bullet}


\newcommand{\st}[1]{{#1}^\circ}
\newcommand{\FR}{{^\bullet\R}}
\newcommand{\ext}[1]{{}^\bullet #1}
\newcommand{\gabs}[1]{\left| #1 \right|_\text{g}}
\newcommand{\Co}{{\mathcal C}^0}
\newcommand{\thick}[1]{\text{\rm th\!}\left(#1\right)}


\setlength{\parindent}{0pt}
\begin{document}
\title[New topologies on CGN and Fermat-Reyes theorem]{New topologies on Colombeau generalized numbers and the 
Fermat-Reyes theorem}

\author[P. Giordano]{Paolo Giordano}
\address{University of Vienna, Austria }
\email{paolo.giordano@univie.ac.at}
\thanks{P. Giordano was supported by Lise Meitner grant M1247-N13 of the Austrian Science Fund FWF}

\author[M. Kunzinger]{Michael Kunzinger}
\address{University of Vienna, Austria }
\email{michael.kunzinger@univie.ac.at}
\thanks{M. Kunzinger was supported by FWF grants P20525, P23714 and Y237-N13 of the Austrian Science Fund FWF}

\begin{abstract}
Based on the theory of Fermat reals we introduce new topologies  on spaces of Colombeau generalized points
and derive some of their fundamental properties. In particular, we obtain metric topologies on the space 
of near-standard generalized points that induce the standard Euclidean topology on the reals. We also
give a new description of the sharp topology in terms of the natural extension of the absolute value
(or of the defining semi-norms in the case of locally convex spaces) that allows to preserve a number
of classical notions. Building on a new point value characterization of Colombeau generalized functions
we prove a Fermat-Reyes theorem that forms the basis of an approach to differentiation on spaces
of generalized functions close to the classical one.

\noindent {\em 2010 Mathematics Subject Classification:} 
46F30, 
46T20. 

\noindent{\em Keywords:} Colombeau generalized functions, sharp topology, Fermat reals, Fermat-Reyes theorem 
\end{abstract}

\maketitle

\section{Introduction}

In recent years, the structural theory of Colombeau generalized functions (see, e.g., \cite{C1,C2,Biag,MObook,GKOS})
has made important advances, especially concerning its topological and algebraic aspects 
(e.g., \cite{AJ,AJOS,DHPV,Gtop, Gtop2,OPS,Ob-Ve,S0,S,Ver10,Ver11}). In particular, the use of sharp topologies
has allowed to capture the inherent asymptotic properties of algebras of generalized functions 
in terms of suitable pseudo-metrics. In the present paper we continue this line of research in two main
directions. On the one hand, we study topologies on spaces of Colombeau generalized numbers. The guiding
principle here is to exploit the structural similarity of Colombeau's theory with the recently 
developed theory of Fermat reals (\cite{Gio09,Gio10a,Gio10b,Gio10c,Gio10e,GK}). This allows us to introduce new topologies,
the so-called Fermat- and $\omega$-topology, 
that are well-adapted to near-standard generalized points (a subset of generalized points sufficiently large
to characterize generalized functions by their point values). Moreover, we obtain a new description of the
sharp topology in terms of the natural extension of the absolute value (or, more generally, of the defining semi-norms
in the case of locally convex spaces) to generalized points. This approach allows to work with sharp topologies
in close analogy to classical (Euclidean or locally convex) topologies, preserving a number of fundamental tools.

Our second goal is to derive a Fermat-Reyes theorem in the context of Colombeau generalized functions. As in the
above topological considerations, this approach to differentiation is close to the classical theory and in our
view has considerable potential for further extensions, in particular in the direction of infinite-dimensional
analysis involving generalized functions. Along the way, we derive new point value characterizations of 
Colombeau generalized functions.

In the remainder of this introduction, we fix some notations from Colombeau's theory and give a brief introduction
to the theory of Fermat reals.

Let $\Om$ be an open subset of $\R^n$ and set $I:=(0,1]$. Then the (special) Colombeau algebra
on $\Om$ is defined as the quotient $\gs(\Om):=\esm(\Om)/\ns(\Om)$, where
\begin{multline*}
\esm(\Om) := \{ (u_\eps) \in \Co(I,\cinfty(\Omega)) \mid \forall K \comp\, \Om \ \forall \alpha\in \N_0^n\ \exists N\in \N : \\
\sup_{x\in K} |\partial^\alpha u_\eps(x)| = O(\eps^{-N})\}
\end{multline*}
\begin{multline*}
\ns(\Om) := \{ (u_\eps) \in \Co(I,\cinfty(\Omega)) \mid \forall K \comp \Om \ \forall \alpha\in \N_0^n\ \forall m\in \N : \\
\sup_{x\in K} |\partial^\alpha u_\eps(x)| = O(\eps^{m})\}
\end{multline*}
Here, and in what follows, every asymptotic relation is for $\eps \to 0^+$. We write $(u_\eps)$ for nets in $\esm(\Omega)$ and $u=[u_\eps]$ for the equivalence class in $\gs(\Om)$ 
of $(u_\eps)$. The mapping $\Omega\mapsto \gs(\Omega)$ is a fine and supple sheaf of differential algebras
and there exist sheaf morphisms embedding the space $\D'$ of Schwartz distributions into $\gs$ (cf.\ \cite{GKOS,PZ}).
The ring of constants in $\gs$, i.e., the subalgebra consisting of elements with 
representatives constant in $x$, is denoted by $\Rtil$ (or $\Ctil$  in the complex-valued setting).
In general we have $\widetilde{\Omega} = \Omega_M/\sim$, where $\Omega_M = \{ (x_\eps) \in \Co(I,\Omega)\mid \exists N\in \N: |x_\eps| = O(\eps^{-N})\}$ and $ (x_\eps) \sim (y_\eps) $ if $ |x_\eps - y_\eps| = O(\eps^m)$ for every $ m\in\N $.
Note that we always assume the representatives to depend continuously on $\eps$. This guarantees better algebraic
properties (e.g., the classical polynomials only have classical roots in $\Ctil$). Also, it does not make a difference
in the quotient whether one assumes smooth or merely continuous dependence on $\eps$. For an extended analysis,
see \cite{BK}.

The space of compactly supported generalized points $\otilc$ is defined as the quotient $\widetilde \Omega_c / \sim$,
where $\Omega_c := \{ (x_\eps) \in \Co(I,\Omega) \mid \exists K\comp \Omega\,\exists\eps_0\,\forall \eps<\eps_0: x_\eps\in K\}$ and $(x_\eps) \sim (y_\eps)$ if $|x_\eps - y_\eps| \to 0$ faster than any power of $\eps$.
Clearly $\Omega$ is embedded into $\otilc$ via $x\mapsto (\eps\mapsto x)$. Any Colombeau
generalized function $u\in \gs(\Om)$ naturally acts on $\otilc$ as $u(x):=[u_\eps(x_\eps)]$, and is in fact uniquely determined by
its values (in $\Rtil$) on compactly supported points (\cite{GKOS,Ndipl}), but not on standard points (elements of $\Om$).
This property is in marked contrast to the case of Schwartz distributions where no general point value
characterization is available and it opens the door to the direct transfer of many concepts from classical
(smooth) analysis to algebras of generalized functions. 

A refined version of this point value characterization was introduced in \cite{KK}: Borrowing from Nonstandard
Analysis, we call $x\in \otilc$ {\em near-standard} if there exists $\lim_{\eps \to 0^+} x_\eps =: \st{x}\in \Omega$ for one (hence any) representative of $x$. We write $x\approx \st{x}$. 
It can be shown that elements of $\gs(\Om)$ are
in fact uniquely determined by their point values on near-standard generalized points. For any near-standard $x\in \otilc$ we 
call $\delta(x):=x-\st{x}$ its infinitesimal part. Then $\delta(x) \approx 0$. The set of near-standard 
generalized points in $\otilc$ is denoted by $\obull$.

The possibility to define new topologies and the Fermat-Reyes theorem in the framework of Colombeau generalized functions derives from the theory of Fermat reals. For the sake of completeness, we recall here the definition of the ring $ \FR $ of Fermat reals, underscoring the possibilities of a transfer of ideas between the two settings.

Since $ \FR $ is defined, like $ \Rtil $, as a quotient set, we firstly need the following class of representatives.
\begin{definition}
\label{def:LittleOhPolynomials} 
We say that $x$ is a little-oh polynomial, and we write $x\in\R_{o}[t]$ if
\begin{enumerate}
\item $x:\R_{\ge0}\longrightarrow\R$ is continuous as $ t\to 0^+ $.
\item We can write $ x_t=r+\sum_{i=1}^{k}\alpha_{i}\cdot t^{a_{i}}+o(t) $ as $ t\to0^{+} $ for suitable $ k\in\N $, $ r,\alpha_{1},\dots,\alpha_{k}\in\R $, $ a_{1},\dots,a_{k}\in\R_{\ge0} $.
\end{enumerate}
\end{definition}
Hence, a little-oh polynomial $x\in\R_{o}[t]$ is a polynomial function with real coefficients, in the real variable $t\ge0$, with
generic positive powers of $t$, and up to a little-oh function as $t\to 0^{+}$. Therefore, as in $ \Rtil $ we do not have arbitrary representatives but more regular ones.
We can now define:
\begin{definition}
\label{def:equalityInFermatReals}
Let $x$, $y\in\R_{o}[t]$, then we say that $x\sim y$ if $x_t=y_t+o(t)$ as $t\to0^{+}$. Because it is easy to prove that $\sim$ is a ring congruence relation, we can define the quotient ring $\FR:=\R_{o}[t]/\sim$. Analogously, we can consider the set $ A_{o}[t] $ of little-oh polynomials with values in the open set $ A \subseteq \R $ and define $ \ext{A} := A_o[t]/\sim $.
\end{definition}
The ring $ \FR $ can be totally ordered with $ x\le y $ if there exist representatives $ x=[x_t]_\sim $ and $ y=[y_t]_\sim $ such that $ x_t \le y_t $ for $ t $ sufficiently small.

Every smooth function $ f:\R \longrightarrow \R $ can be extended to $ \FR $ as follows.
\begin{definition}
\label{def:extensionOfFunctions}
Let $A$ be an open subset of $\R$, and $f:A\longrightarrow\R$ a smooth function. For $[x]_\sim\in\ext A$ define $ \ext{f}([x]_{\sim}):=[f\circ x]_{\sim}$.
\end{definition}
In this way, e.g., every smooth function becomes equal to its tangent line in a first order infinitesimal neighbourhood
\[
\forall h \in D:\ \ext{f}(x+h) = \ext{f}(x) + h \cdot f'(x),
\]
where $ D:=\{h\in \FR \,|\, h^2=0 \} $ is the set of first order infinitesimals and $ x\in A $.
We refer to \cite{Gio10a} for further details. To see applications of this type of nilpotent infinitesimals to elementary physics,
see \cite{Gio10b}. For meaningful metrics on $\FR$ and for roots of (nilpotent) infinitesimals and their applications to fractional derivatives, see \cite{GK}.

Not every function we are interested in is of the form $ \ext{f} $. For example, $ f(x)=x+p $, for $ p:=[t^{1/2}]_\sim\in\FR  $ is not of this form because it takes standard reals into nonstandard values. It was to develop the calculus of this type of "quasi-standard" smooth functions that we proved the Fermat-Reyes theorem, see \cite{Gio10e}. Roughly speaking, a quasi-standard smooth function is a map that locally can be written as $ f(x)=\ext{\alpha}(p,x) $, where $ p $ is a parameter, possibly nonstandard, like in the previous example. Essentially, the Fermat-Reyes theorem states the existence and uniqueness of a quasi-standard smooth incremental ratio for every quasi-standard smooth function $ f $; see below for an analogous statement in the context of Colombeau generalized functions.

We finally underscore that the theory of Fermat reals can be developed very far: every smooth manifold can be analogously extended using this type of infinitesimals; more generally, this extension is applicable to every diffeological space (\cite{Igl}) obtaining a functor with very good preservation properties; the category of diffeological spaces is cartesian closed (\cite{Igl,Gio10c}) and embeds the category of smooth manifolds, so that these Fermat extensions can also be applied to infinite dimensional function spaces. For more details, see \cite{Gio10e,Gio09}.

\section{Topologies on Colombeau generalized points} \label{topsec}
The introduction of suitable topologies adapted to the asymptotic nature of the theory, the so-called sharp topologies
on spaces of Colombeau generalized functions,
is due to D.\ Scarpalezos in the early 1990ies (published only later in \cite{S0,S}, cf.\ also \cite{Biag}).
In \cite{Gtop,Gtop2}, this theory was then extended to a full-scale locally convex theory for algebras of
generalized functions. Sharp topologies have had a strong impact on the structural theory of Colombeau generalized
functions, cf., e.g., \cite{AJ, AJOS,M,GV}.

We briefly recall the definition and main properties of the sharp topology on $\Rtil$. 
Let 
\begin{align*}
v&:\esm \longrightarrow  (-\infty,\infty] \\
v&((u_\eps)) := \sup \{b\in \R \mid |u_\eps| = O(\eps^b) \}.
\end{align*}
Then $v((u_\eps)+(n_\eps)) = v((u_\eps))$ for all $(n_\eps) \in \ns$, so $v$ induces a well-defined pseudovaluation (again denoted by $v$) on $\Rtil$, and we have $v(u)=\infty$ if and only if $u=0$, $v(u\cdot w) \ge v(u) + v(w)$, and $v(u + w) \ge \min[v(u),v(w)]$, $v(u - w) = v(w - u)$. Letting $|-|_e: \Rtil \to [0,\infty)$, $|u|_e := \exp(-v(u))$ we conclude that $|u+v|_e \le \max(|u|_e, |v|_e)$, as well as $|uv|_e \le |u|_e|v|_e$.
Finally, we  obtain the translation invariant ultrametric
\begin{align*}
d&_s: \Rtil \times \Rtil \longrightarrow \R_+\\
d&_s(u,v) := |u-v|_e
\end{align*}
on $\Rtil$. $d_s$ generates the sharp topology on $\Rtil$. Analogously one defines the sharp topology on $\Rtil^n \cong
\widetilde{\R^n}$, which coincides with the product topology. $d_s$ is a translation-invariant ultrametric and $\Rtil$ 
with the sharp topology is complete. Usually, $\otilc \subseteq \Rtil^n$ is endowed with the trace of the sharp topology. 

More generally, as C.\ Garetto has shown in \cite{Gtop,Gtop2}
one may functorially assign a space of Colombeau generalized functions $\gs_E$ to any
given locally convex space $E$: Suppose that the topology of $E$ is induced by a family of
seminorms $(p_\alpha)_{\alpha\in A}$ and set
\begin{eqnarray*}
\mathcal{M}_E &:=& \{ (u_\eps) \in \cinfty(I,E) \mid \forall \alpha\, \exists N\,: p_\alpha(u_\eps)= O(\eps^{-N})\}\\
\mathcal{N}_E &:=& \{ (u_\eps) \in \cinfty(I,E) \mid \forall \alpha\, \forall m\,: p_\alpha(u_\eps)= O(\eps^{m})\}\\
\gs_E &:=& \mathcal{M}_E/\mathcal{N}_E
\end{eqnarray*}
Then $\gs_E$ is a $\widetilde{\C}$-module. The special Colombeau algebra as defined above
corresponds to the special case $E=\cinfty(\Omega)$.

On $\gs_E$ we introduce the family of valuations 
$$
v_\alpha(u):= \sup\{b\in \R \mid p_\alpha(u_\eps)=O(\eps^b)\}.
$$
These valuations induce ultra-pseudo-seminorms by
$$
\mathcal{P}_\alpha := e^{-v_\alpha},
$$
and the family $(\mathcal{P}_\alpha)_{\alpha\in A}$ defines the so-called sharp topology on $\gs_E$.

The sharp topology is Hausdorff and induces the discrete topology on $\R\subseteq \Rtil$. Indeed, for $ a,b \in \R$, $a\not= b$, 
$$
d_s(a,b) = |a-b|_e = \exp(-v(a-b)) = \exp(0) = 1\,.
$$
This is a necessary and general result. Indeed, let us suppose that $ (R,\tau) $ is a topological space containing the reals $ \R \subseteq R $. Moreover, let us also assume that $R$ is endowed with 
a partially ordered ring structure $ (R,+,\cdot,\le) $ extending the usual structure on the reals. We can hence define that $ h\in R $ is an infinitesimal, and we will write $ h\approx 0 $, if
\begin{equation}
\label{inf}
\forall r\in \R_{>0}:\ -r < h < r.
\end{equation}
Finally, we say that $ x, y \in R $ are infinitely close if $ y-x \approx 0 $, and we define the \emph{monad} of $ x\in R $ as 
\[
\mu(x):=\{y\in R \,|\, y \approx x \}.
\]
Then we have the following
\begin{prop}
\label{infNeigh}
Under the above assumptions, if $ (R,\tau) $ contains infinitesimal neighbourhoods of the reals, i.e.,
\[
\forall r\in \R\ \exists U \in \tau:\ r\in U \quad \text{and} \quad U\subseteq \mu(r),
\]
then the topology induced by $ (R,\tau) $ on $ \R $ is discrete.
\end{prop}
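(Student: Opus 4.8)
The plan is to prove discreteness of the induced topology the most direct way, by exhibiting, for each $r \in \R$, an open set of $(R,\tau)$ whose trace on $\R$ is exactly the singleton $\{r\}$. Since $r$ is arbitrary, this shows every singleton is open in the subspace topology, which is the definition of discreteness.

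First I would fix $r \in \R$ and feed it into the hypothesis: by assumption there exists $U \in \tau$ with $r \in U$ and $U \subseteq \mu(r)$. The natural candidate for a basic open neighbourhood of $r$ inside $\R$ is then $U \cap \R$, and the whole argument reduces to the claim $U \cap \R = \{r\}$.

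The inclusion $\{r\} \subseteq U \cap \R$ is immediate from $r \in U$ and $r \in \R$. For the reverse inclusion I would take an arbitrary $s \in U \cap \R$. From $s \in U \subseteq \mu(r)$ we get $s \approx r$, that is $s - r \approx 0$, so by the definition \eqref{inf} of infinitesimal we have $-\rho < s - r < \rho$ for every $\rho \in \R_{>0}$. Here the key observation is that both $s$ and $r$ lie in $\R$, and since the ring operations and the order $\le$ on $R$ restrict to the usual ones on $\R$, the difference $s - r$ is an ordinary real number and the displayed inequalities are the familiar ones among reals. A real number squeezed strictly between $-\rho$ and $\rho$ for every positive real $\rho$ must vanish, so $s - r = 0$ and $s = r$. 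This gives $U \cap \R = \{r\}$ and finishes the proof.

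The only step with genuine content — and the one I would take care to phrase correctly — is this last one: the fact that $\R$ admits no nonzero infinitesimals. It is precisely here that the assumption that $(R,+,\cdot,\le)$ \emph{extends} the ordered-ring structure of $\R$ is indispensable, since it guarantees that reading the infinitesimality inequalities inside $\R$ yields the ordinary order and that the Archimedean-type conclusion $s = r$ is legitimate; without compatibility of the order on $R$ with that of $\R$ the argument would break down. Everything else is a routine unwinding of the definitions of monad, infinitesimal, and the subspace topology.
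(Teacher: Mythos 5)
Your argument is correct and is exactly the paper's proof, merely spelled out in more detail: the paper also reduces to showing $U\cap\R=\{r\}$ and invokes the definition of infinitesimal, while you make explicit the key point that a real number lying strictly between $-\rho$ and $\rho$ for all $\rho\in\R_{>0}$ must be zero, which relies on the order on $R$ extending that of $\R$. No issues.
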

\begin{proof}
Let $ r\in \R $ and let $ U \subseteq \mu(r) $ be an infinitesimal neighbourhood of $ r $, then $ U\cap \R = \{ r \} $ by the definition of an infinitesimal.
\end{proof}
Of course, $ \Rtil $ with the sharp topology verifies all the assumptions of this proposition.

Also, since $|\ |_e$ measures the $\eps$-asymptotics of elements of $\Rtil$, it does not distinguish between
scalar multiples: $|\lambda u|_e = |u|_e$ for all $\lambda\in\R\setminus\{0\}$. Such properties may 
appear unwanted when comparing with analysis over the reals. However,
we will see below how to arrive at the sharp topology using the absolute value in $ \Rtil $, that instead verifies $ |\lambda \cdot x| = |\lambda| \cdot |x| $ and many other properties very similar to those of classical analyis.

Proposition \ref{infNeigh} implies that every metric on $ \Rtil $ that admits infinitesimal neighbourhoods cannot extend the usual Euclidean metric. Stated differently, every metric on $ \Rtil $ extending the standard metric on $ \R $ cannot have purely infinitesimal neighbourhoods. To show that such a metric exists, 
we transfer ideas from the theory of Fermat reals (see \cite{GK}) to spaces of Colombeau generalized points. 
\begin{lem} \label{complem}
Let $D:=\{x\in \Rtil^n \mid x\approx 0\}$ be the set of infinitesimals of $ \Rtil^n $, then
 $D$ is a closed (hence complete) subset of $(\Rtil^n,d_s)$.
\end{lem}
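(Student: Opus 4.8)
The plan is to replace the order-theoretic description of $D$ by a concrete asymptotic one and then to exploit that the sharp distance cannot tell infinitesimals apart from non-infinitesimals below the threshold $1$. First I would record the characterization that, for $x=[x_\eps]\in\Rtil^n$, one has $x\approx 0$ if and only if $x_\eps\to 0$ as $\eps\to 0^+$ for one (hence every) representative. Unwinding the definition \eqref{inf}: for a real $r>0$, the strict inequality $-r<x<r$ says componentwise that $r-x$ and $x+r$ are strictly positive, i.e.\ some representative satisfies $r-x_\eps\gtrsim\eps^m$ and $x_\eps+r\gtrsim\eps^{m'}$ eventually, hence $|x_\eps|<r$ for $\eps$ small; letting this hold for every $r>0$ is exactly $x_\eps\to 0$, and conversely $x_\eps\to 0$ yields all these strict inequalities. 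Since two representatives differ by a negligible net, which itself tends to $0$, this condition is independent of the representative, so $D=\{x\in\Rtil^n\mid x_\eps\to 0\}$ is well defined.

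The key step is the observation that distances below $1$ already fix the $\eps\to 0^+$ limit. Concretely, if $d_s(x,y)<1$ then $|x-y|_e=\exp(-v(x-y))<1$, i.e.\ $v(x-y)>0$; hence there is $b>0$ with $|x_\eps-y_\eps|=O(\eps^b)$, so in particular $x_\eps-y_\eps\to 0$. Consequently $x_\eps\to 0$ holds if and only if $y_\eps\to 0$, that is, $x\in D\iff y\in D$ whenever $d_s(x,y)<1$.

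From this I would conclude at once that both $D$ and its complement are open: for any $x$ the whole ball $B_1(x)=\{y\mid d_s(x,y)<1\}$ is contained in $D$ if $x\in D$ and in $\Rtil^n\setminus D$ if $x\notin D$, so each of the two sets is a union of such balls. Thus $D$ is clopen, in particular closed. Completeness then follows immediately, since a closed subset of the complete metric space $(\Rtil^n,d_s)$ is itself complete.

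I expect the only delicate point to be the characterization in the first step: translating the strict inequalities of \eqref{inf} into $x_\eps\to 0$ requires the usual care with strictness and representative-independence in $\Rtil$, and one must resist identifying $D$ with the unit ball $B_1(0)=\{x\mid v(x)>0\}$. The latter contains only polynomially decaying nets and is a proper subset of $D$ (e.g.\ $x_\eps=1/|{\log\eps}|$ for small $\eps$ tends to $0$ yet has $v(x)=0$, so $d_s(0,x)=1$), which shows that closedness of $D$ is genuinely more than the triviality that balls are closed; what rescues the argument is precisely that the threshold $1$ is the level at which the decisive asymptotics as $\eps\to 0^+$ are determined.
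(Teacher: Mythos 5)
Your proof is correct and rests on the same key observation as the paper's: sharp distance below the threshold $1$ forces $v(x-y)>0$, hence $x_\eps-y_\eps\to 0$, so membership in $D$ is constant on unit balls (the paper runs this through a convergent sequence, picking $m_0$ with $v(x_{m_0}-x)>1$ and transferring $x_{m_0,\eps}\to 0$ to $x_\eps\to 0$). Your packaging as ``$D$ and its complement are unions of unit balls, hence $D$ is clopen'' is a slightly cleaner and marginally stronger conclusion, and your warning that $D$ strictly contains $B_1(0)$ (e.g.\ $x_\eps=1/|\log\eps|$) is a worthwhile point, but the underlying argument is the same.
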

\begin{proof} Let $(x_m)$ be a sequence in $D$ converging to some $x\in \Rtil^n$. Then $v(x_m-x)\to \infty$
as $m\to \infty$. Choose $m_0$ such that $v(x_m-x) > 1$ for all $m\ge m_0$. Then there exists some $\eps_0>0$
with $|x_{m_0,\eps} - x_\eps| \le \eps$ for $\eps<\eps_0$. Since $x_{m_0}\in D$ this implies that $x_\eps \to 0$
for $\eps\to 0$, i.e., $x\in D$.
\end{proof}
For any open subset $\Omega$ of $\R^n$ we next define two new distance functions $d_F$ and $d_\omega$ on $\obull$
as follows: for $x$, $y \in \obull$ let
$$
d_F(x,y) := |\st{x} - \st{y}|, \qquad d_\omega(x,y) := |\st{x} - \st{y}| + d_s(\delta(x),\delta(y)).
$$ 
The corresponding topologies are called the {\em Fermat topology} and the {\em $\omega$-topology} on $\obull$, respectively.
\begin{prop}\label{F-omega-sharp-top} \
\begin{itemize}
\item[(i)] $d_F$ is a pseudometric on $\obull$.
\item[(ii)] $d_\omega$ is a metric on $\obull$.
\item[(iii)] The $\omega$-topology is strictly finer than the Fermat topology.
\item[(iv)] The sharp topology on $\obull$ is strictly finer than the $\omega$-topology.
\end{itemize}
\end{prop}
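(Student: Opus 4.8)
The plan is to obtain (i) and (ii) by recognizing the two distance functions as pullbacks of known (pseudo)metrics, and then to prove each comparison in (iii)--(iv) by pairing an inclusion of balls with an explicit witness for strictness. Since $\st{\cdot}\colon\obull\to\Omega\subseteq\R^n$ is well defined on $\obull$, the function $d_F(x,y)=|\st{x}-\st{y}|$ is the pullback of the Euclidean metric under $\st{\cdot}$; symmetry and the triangle inequality are inherited, and definiteness fails exactly because $\st{\cdot}$ is not injective (for any nonzero $h\approx0$ one has $d_F(x,x+h)=0$), so $d_F$ is a genuine pseudometric, which is (i). Likewise $(x,y)\mapsto d_s(\delta(x),\delta(y))$ is the pullback of the metric $d_s$ under $\delta$, hence a pseudometric, so $d_\omega$ is a sum of two pseudometrics and thus a pseudometric. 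For (ii) it remains to check definiteness: $d_\omega(x,y)=0$ forces $\st{x}=\st{y}$ and, since $d_s$ is a genuine metric, also $\delta(x)=\delta(y)$; then $x=\st{x}+\delta(x)=\st{y}+\delta(y)=y$.

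For (iii) the inequality $d_F\le d_\omega$ is immediate, so $B_{d_\omega}(x,\eps)\subseteq B_{d_F}(x,\eps)$ and every Fermat-open set is $\omega$-open. For strictness I would fix a standard $x_0$ and a nonzero infinitesimal $h$ with $v(h)=1$ (e.g.\ represented by $\eps$), so that $d_s(h,0)=|h|_e=e^{-1}$. The point $x_0+h\in\obull$ shares the standard part of $x_0$ and thus lies in every Fermat-ball about $x_0$, whereas $d_\omega(x_0+h,x_0)=d_s(h,0)=e^{-1}$; hence for any radius $\rho<e^{-1}$ the $\omega$-open ball $B_{d_\omega}(x_0,\rho)$ excludes $x_0+h$ and contains no Fermat-ball about $x_0$, so it is not Fermat-open.

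For (iv) the finer direction is the heart of the matter and hinges on a single observation: if $d_s(x,y)<1$ then $v(x-y)>0$, so $|x_\eps-y_\eps|=O(\eps^b)$ for some $b>0$, forcing $x_\eps-y_\eps\to0$ and hence $\st{x}=\st{y}$. When $\st{x}=\st{y}$ we have $\delta(x)-\delta(y)=x-y$, whence $d_s(\delta(x),\delta(y))=|x-y|_e=d_s(x,y)$ and therefore $d_\omega(x,y)=d_s(x,y)$. Thus, given an $\omega$-open $U\ni x$ and $\eps>0$ with $B_{d_\omega}(x,\eps)\subseteq U$, the choice $\rho:=\min(\eps,1/2)$ yields $B_{d_s}(x,\rho)\subseteq B_{d_\omega}(x,\eps)\subseteq U$, so the sharp topology is finer. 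For strictness I would take standard points $y_m:=x_0+v_m$ with $v_m\to0$ in $\R^n$, $v_m\neq0$ and $x_0+v_m\in\Omega$: then $d_\omega(y_m,x_0)=|v_m|\to0$, but $v_m$ is a nonzero real constant, so $v(v_m)=0$ and $d_s(y_m,x_0)=1$ for all $m$. Consequently $B_{d_s}(x_0,1/2)$ excludes every $y_m$ and is sharp-open but not $\omega$-open.

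The step I expect to be the main obstacle is precisely this finer direction of (iv): the remaining items reduce to pullback computations or to a single explicit point or sequence, but here one must see that a sufficiently small sharp distance rigidly collapses the Euclidean component of $d_\omega$ by forcing equal standard parts. Once this standard-part rigidity is isolated, the identity $d_\omega=d_s$ on sharp-close pairs makes the inclusion of balls immediate.
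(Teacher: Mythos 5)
Your proposal is correct and follows essentially the same route as the paper: the (pseudo)metric axioms in (i)--(ii) are routine, strictness in (iii) is witnessed by a point of the form $x_0+[\eps^s]$ sharing the standard part of $x_0$, the finer direction of (iv) rests on exactly the paper's key observation that $d_s(x,y)<1$ forces $\st{x}=\st{y}$ and hence $d_\omega(x,y)=d_s(x,y)$, and strictness in (iv) uses a sequence of standard points converging Euclideanly but staying at sharp distance $1$ (the paper's Example with $x_k=1/k$). No gaps.
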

\begin{proof} (i) and (ii) are evident, as is the fact that the $\omega$-topology is finer than the Fermat topology.
To see that it is strictly finer let $0<R<1$ and let $x\in \obull$. We show that, for each $r>0$, $B_r(x,d_F)\not\subseteq 
B_R(x,d_\omega)$. Choose $s>0$ such that $e^{-s} > R$ and let $y:=[x_\eps+\eps^s]$. Then $\st{y} = \st{x}$, so $y\in
B_r(x,d_F)$ for each $r>0$. However,
$$
d_\omega(x,y) = d_s(\delta(x),\delta(y)) = |\eps^s|_e = e^{-s} > R.
$$
\noindent(iv) We first note that for $r<1$ we have $\{x\in \obull\mid d_s(x,0)<r\} \subseteq
\{x\in \obull\mid d_\omega(x,0)<r\}$. Indeed, $d_s(x,0)<1$ implies $\st{x} = 0$, so $d_\omega(x,0) =
d_s(\delta(x),0)=d_s(x,0)$ for such $x$. By translation-invariance this shows that the sharp 
topology is finer than the $\omega$-topology on $\obull$. That it is strictly finer follows from
Example \ref{conv_ex} below. 
\end{proof}
\begin{remark}\label{toprem}
The following observations are immediate from the definitions:
\begin{enumerate}[labelindent=\parindent,leftmargin=*,label=(\roman*),align=left ]
\item If $x$, $y\in \Omega$ then $d_F(x,y) = |x-y| = d_\omega(x,y)$, hence both $d_F$ and $d_\omega$ induce
the usual Euclidean topology on $\Omega$.
\item By Proposition \ref{infNeigh}, both the Fermat and the $ \omega $-topology don't contain infinitesimal neighbourhoods. E.g., $ d_\omega(x,y) < r$ iff there are $ b,c\in\R_{>0} $ such that $ b+c < r $, $ |\st{x} - \st{y}| \le b$ and 
$|\delta(x) - \delta(y)|_e \le c$ and therefore every neighbourhood of $ x $ always contains both standard and infinitely close points.
\item\label{isomSpaces} The map $x\mapsto (\st{x},\delta(x))$, $\obull \to \Omega\times (D,d_s)$ is an isomorphism of metric spaces.
\item Let $x\in \Omega$ and $y$, $z\in \mu(x)$. Then $d_\omega(y,z) = e^{-v(\delta(y),\delta(z))}$. 
\item The Fermat topology is not Hausdorff.
\end{enumerate}
\end{remark}
\begin{prop} $(\R^n)^\bullet$ is complete with respect to the Fermat topology.
\end{prop}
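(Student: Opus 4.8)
The plan is to exploit that $d_F$ depends only on standard parts: by definition $d_F$ is the pull-back of the Euclidean metric of $\R^n$ along the standard part map $\st{\cdot}\colon (\R^n)^\bullet \to \R^n$, $x\mapsto \st{x}$. Since $d_F$ is a pseudometric generating the Fermat topology (Proposition \ref{F-omega-sharp-top}(i)), this topology is first countable, so it suffices to prove that every $d_F$-Cauchy sequence converges.

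First I would fix a Cauchy sequence $(x_m)_{m\in\N}$ in $((\R^n)^\bullet,d_F)$. Unwinding $d_F(x_m,x_k)=|\st{x_m}-\st{x_k}|$, the Cauchy condition says precisely that the sequence of standard parts $(\st{x_m})_m$ is Cauchy in $(\R^n,|\cdot|)$. By completeness of $\R^n$ it therefore converges to some $a\in\R^n$.

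Next I would regard $a$ as a point of $(\R^n)^\bullet$ via the constant embedding $\eps\mapsto a$; this point is near-standard with $\st{a}=a$. Then $d_F(x_m,a)=|\st{x_m}-a|\to 0$, so $x_m\to a$ in the Fermat topology, which is the desired conclusion.

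I do not expect a genuine obstacle here; the only subtlety worth flagging is that $d_F$ is merely a pseudometric and the Fermat topology is not Hausdorff (Remark \ref{toprem}(v)), so the limit $a$ is not unique: every point of the monad $\mu(a)$ is an equally valid limit. This causes no difficulty for completeness, which only demands the existence of a limit. Equivalently, one may phrase the argument as the observation that $((\R^n)^\bullet,d_F)$ has Hausdorff (metric) quotient isometric to the complete space $(\R^n,|\cdot|)$.
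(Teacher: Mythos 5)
Your argument is correct and is essentially identical to the paper's own proof: both reduce the $d_F$-Cauchy condition to the Cauchy condition on standard parts, use completeness of $\R^n$, and note that the sequence then converges to $a$ (indeed to every point of $\mu(a)$, since $d_F$ is only a pseudometric). No further comment is needed.
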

\begin{proof} Let $(x_m)$ be a Cauchy sequence w.r.t.\ $d_F$ in $(\R^n)^\bullet$. Then $(\st{x}_m)$ is
a Cauchy sequence in $\R^n$ hence converges to some $x\in\R^n$. Therefore, $(x_m)$ converges to any
element of $\mu(x)$.
\end{proof}
By Rem.\ \ref{toprem} \ref{isomSpaces} and Lemma \ref{complem} we also have:
\begin{prop} $(\R^n)^\bullet$ is complete with respect to the $\omega$-topology.
\end{prop}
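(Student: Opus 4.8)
The plan is to reduce completeness of $\left((\R^n)^\bullet, d_\omega\right)$ to completeness of its two constituent factors, using the metric isomorphism recorded in Remark~\ref{toprem}\,\ref{isomSpaces} together with Lemma~\ref{complem}. The whole argument is then just the standard fact that an isometry onto a product of complete spaces transports completeness.

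First I would note that, writing $x\mapsto(\st{x},\delta(x))$, the very definition of the $\omega$-metric exhibits it as the $\ell^1$-type sum metric on a product: $d_\omega(x,y) = |\st{x}-\st{y}| + d_s(\delta(x),\delta(y))$. By Remark~\ref{toprem}\,\ref{isomSpaces} this map is an isometry of $(\R^n)^\bullet$ onto $\R^n\times(D,d_s)$, so it suffices to show that the latter product space is complete. For the sum metric on a product, each coordinate projection is $1$-Lipschitz, so a Cauchy (resp.\ convergent) sequence projects to a Cauchy (resp.\ convergent) sequence in each factor; conversely, coordinate-wise convergence forces convergence in the sum metric. Hence the product is complete whenever both factors are.

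Next I would verify the two factors. The first factor is $\R^n$ with the Euclidean metric, which is complete. The second factor is $(D,d_s)$, the set of infinitesimals of $\Rtil^n$, which is complete by Lemma~\ref{complem}, being a closed subset of the complete space $(\Rtil^n,d_s)$. Therefore $\R^n\times(D,d_s)$ is complete, and transporting this back along the isometry of Remark~\ref{toprem}\,\ref{isomSpaces} gives completeness of $\left((\R^n)^\bullet, d_\omega\right)$.

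No step presents a genuine obstacle here; the only point demanding care is the hypothesis $\Omega=\R^n$. It is precisely this that guarantees completeness of the standard-part factor, whereas for a general open $\Omega\subsetneq\R^n$ that factor (the Euclidean space $\Omega$) need not be complete, so the same argument would not yield completeness of $\obull$ without modification.
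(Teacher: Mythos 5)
Your argument is correct and is precisely the proof the paper intends: it cites Remark \ref{toprem}\,\ref{isomSpaces} and Lemma \ref{complem} and leaves the routine product-completeness step implicit, which you have simply written out. Your closing observation about the role of $\Omega=\R^n$ (versus a general open $\Omega$) is a sensible extra remark but does not change the argument.
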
 
This result also shows that Cauchy completeness with respect to a metric extending the classical Euclidean metric is not incompatible with the existence of infinitesimals. Of course, $ \Rtil $ is not Dedekind complete because the set of infinitesimals $ D $ is bounded but without least upper bound.

In order to compare with the $\omega$-topology, we recall the following continuity result for the sharp topology:
\begin{lem}
\label{contlem} 
Let $u\in \gs(\Omega)$. Then the induced map $u: \otilc \to \Rtil$ is continuous with respect to the sharp topology.
\end{lem}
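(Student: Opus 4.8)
The plan is to prove continuity at an arbitrary point $x_0 \in \otilc$ by a first-order mean value estimate, controlled by the moderateness bound on the first derivatives of a representative of $u$. First I would fix a representative $(u_\eps) \in \esm(\Omega)$ of $u$ and a compactly supported representative $(x_{0,\eps})$ of $x_0$, so that there is a compact set $K \comp \Omega$ with $x_{0,\eps} \in K$ for $\eps$ below some $\eps_0$. Since $K$ is compact and $\Omega$ open, for $\rho>0$ small enough the compact neighbourhood $L := \{z : \mathrm{dist}(z,K) \le \rho\}$ still satisfies $L \comp \Omega$. Applying the defining estimate of $\esm(\Omega)$ on $L$ with a multi-index of length one yields some $N \in \N$ with $\sup_{z \in L} |\nabla u_\eps(z)| = O(\eps^{-N})$; writing $C := [\sup_{z\in L}|\nabla u_\eps(z)|] \in \Rtil$, this moderate bound satisfies $v(C) \ge -N$.

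The core of the argument is the estimate from the fundamental theorem of calculus. For $x \in \otilc$ with $d_s(x,x_0) < 1$ we have $v(x - x_0) > 0$, hence $|x_\eps - x_{0,\eps}| \to 0$; combined with $x_{0,\eps} \in K$ this forces the entire segment $[x_{0,\eps}, x_\eps]$ to lie in $L$ once $\eps$ is small enough that $|x_\eps - x_{0,\eps}| < \rho$. On this segment the mean value inequality gives
\[
|u_\eps(x_\eps) - u_\eps(x_{0,\eps})| \le \Big(\sup_{z\in L}|\nabla u_\eps(z)|\Big)\, |x_\eps - x_{0,\eps}|
\]
for all sufficiently small $\eps$. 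Passing to valuations and using that $v$ is order-preserving (immediate from its definition as a supremum of exponents) together with the product inequality, I obtain
\[
v\big(u(x) - u(x_0)\big) \ge v(C) + v(x - x_0) \ge -N + v(x - x_0),
\]
where the valuation of the Euclidean norm of a vector coincides with the valuation of the vector.

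Finally I would translate this into metric language. Given a target $\eta > 0$, the goal $d_s(u(x),u(x_0)) = \exp(-v(u(x)-u(x_0))) < \eta$ amounts to $v(u(x)-u(x_0)) > -\log\eta$. By the estimate above this holds as soon as $v(x-x_0) > N - \log\eta$, i.e.\ whenever $d_s(x,x_0) < \min(1, \eta\, e^{-N})$. Thus the ball of radius $\min(1,\eta e^{-N})$ around $x_0$ is mapped into the $\eta$-ball around $u(x_0)$, proving continuity at $x_0$. Since $x_0$ is arbitrary and $\otilc$ carries the trace of the sharp topology on $\Rtil^n$ (whose valuation is the componentwise minimum, for which the Euclidean-norm estimate above is equivalent), this gives continuity of $u$ on all of $\otilc$.

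The step I expect to be the main obstacle is the uniform compactness control: guaranteeing that the segments $[x_{0,\eps},x_\eps]$ stay inside a single fixed $L \comp \Omega$ for all small $\eps$, so that the moderateness bound of $(u_\eps)$ on $L$ may legitimately be invoked. This is exactly where the compact support of $x_0 \in \otilc$ and the fact that sharp proximity forces $x_\eps - x_{0,\eps} \to 0$ are both essential; for generalized points without compact support the argument would break down. By contrast, checking independence of the chosen representatives is routine, since $u(x)$ is already well defined as an element of $\Rtil$.
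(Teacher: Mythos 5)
Your proposal is correct and follows essentially the same route as the paper's proof: both control $u_\eps(x_\eps)-u_\eps(x_{0,\eps})$ by integrating $Du_\eps$ along the segment joining the two points, invoke moderateness of the first derivatives on a compact neighbourhood to get the Lipschitz-type bound $|u(x)-u(x_0)|_e \le e^{N}|x-x_0|_e$, and conclude continuity. The only difference is presentational (you argue with $\eta$--$\delta$ balls and explicit valuations, the paper with sequences), and your extra care about the enlarged compact set $L$ is exactly the implicit step in the paper's ``for $\eps$ sufficiently small the line connecting $x_{k,\eps}$ and $x_\eps$ is contained in $\Omega$''.
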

\begin{proof} Let $x\in \otilc$ and let $(x_k)$ be a sequence in $\otilc$ converging to $x$ with respect to 
$d_s$. Let $k_0$ be such that $v(x-x_k)\ge 1$ for all $k\ge k_0$. Then for any such $k$ and $\eps$ sufficiently
small the line connecting $x_{k,\eps}$ and $x_\eps$ is contained in $\Omega$ and we have 
$$
u_\eps(x_{k,\eps})-u_\eps(x_\eps) = \int_0^1 Du_\eps(x_\eps + \tau(x_{k,\eps}-x_\eps))\,d\tau \cdot (x_{k,\eps}-x_\eps), 
$$
so $|u_\eps(x_{k,\eps})-u_\eps(x_\eps)| \le \eps^{-N}|x_{k,\eps}-x_\eps|$ for some $N>0$ and $\eps$ small by moderateness 
of $(u_\eps)$. Thus, finally, $|u(x_k) - u(x)|_e \le e^N |x_k-x|_e \to 0 \quad (k\to \infty)$.
\end{proof}
However, if $u\in \gs(\Omega)$, the associated map $u: \obull \to \Rtil$ is not continuous in general, as is demonstrated
by the following example.
\begin{eg} \label{conv_ex} 
Let $\Omega=\R$, $u_\eps(x)=\frac{x}{\eps}$ and $x_k:=\frac{1}{k}$. Then $u\in \gs(\Omega)$, $d_\omega(x_k,0)\to 0$,
but
$$
d_s(u(x_k),0) = \left|\left[\frac{1}{k\eps}\right]\right|_e = \left|\left[\frac{1}{\eps}\right]\right|_e = e^1 \not\to 0.
$$
\end{eg}
In a certain sense, this is another necessary and general result. Indeed, suppose that $(M,d)$ is a metric space containing the real interval $[0,1]$ and extending the usual metric on these reals
\begin{equation}
\label{extensionDistReals}
\forall x, y\in [0,1]:\ d(x,y) = |x - y|.
\end{equation}
Then $ d\left( \frac{1}{k},0 \right) \to 0$. This gives an intuitive contradiction if in $ M $ we have at least one non zero (actual) infinitesimal $ h\in M\setminus\{0\} $. In fact, $ d\left(\frac{1}{k} , h \right) \to d(0,h)>0 $. Therefore, it seems that although the sequence $ \left(\frac{1}{k}\right)_{k\in\N} $ approaches $ 0 $ it stays always far away from every infinitesimal. Let us note that we didn't formally use the condition \eqref{inf} of being an infinitesimal, so the contradiction is only intuitive.

If there are infinitesimal neighbourhoods, it is intuitively clear that $ \frac{1}{k} \not \to 0$ because with only finite $ k\in\N $ one cannot eventually arrive in an infinitesimal neighbourhood of $ 0 $. We can also say that in the presence of actual infinitesimals, any tortoise will be a scholar of Zeno because to reach its goal it needs an infinite number of steps. As stated above, both the Fermat and the $ \omega $-metrics do not have this type of infinitesimals. In the sharp metric there are infinitesimal balls with finite radius which bounds the order of its infinitesimals. However, there is another possibility to have a metric extending the Euclidean one and admitting infinitesimal neighbourhoods: we must permit to measure the radius of a ball using also infinitesimals. This can be done very naturally considering the absolute value on $ \Rtil $.
\begin{definition}
Let $ x, y \in \Rtil$, then
\begin{align*}
\gabs{x-y} &:= [|x_\eps - y_\eps|] \in \Rtil \\
\min(x,y)  &:= [\min(x_\eps, y_\eps)] \in \Rtil \\
\max(x,y)  &:= [\max(x_\eps, y_\eps)] \in \Rtil.
\end{align*}
\end{definition}
\noindent Note that these are the continuous versions of the smooth ones considered in \cite{BK}, which correspond to each other by the natural isomorphism $\tau_{sm}$ from \cite{BK}, sec.\ 3. The following properties are immediate:
\begin{align*}
\label{propAbs}
&\gabs{x} = \max(x,-x) \\
&\gabs{x}\ge 0 \\
&\gabs{x}=0 \Rightarrow x=0 \\
&\gabs{\lambda \cdot x} =  \gabs{\lambda} \cdot \gabs{x}\\
&\gabs{x+y} \le \gabs{x} + \gabs{y} \\
&\gabs{r} = |r|\quad \forall r \in \R.
\end{align*}
Here, $ \le $ is the usual order relation on $ \Rtil $, i.e. $ x \le y $ iff there are representatives $ (x_\eps) $ and $ (y_\eps) $ such that $ x_\eps \le y_\eps $ for $ \eps $ sufficiently small. 

Usually, the order topology is defined for totally ordered sets whereas $ \Rtil $ is only partially ordered. 
For example, we only have
\begin{equation}
\label{lackIntersection}
(\max(a,c), \min(b,d)) \subseteq (a,b) \cap (c,d) \subseteq [\max(a,c), \min(b,d)]
\end{equation}
and for non comparable $ a, c $ we can have that $ \max(a,c) \in (a,b) \cap (c,d) $ (e.g. $ a_\eps=\sin\left(\frac{1}{\eps}\right) $ and $ c_\eps=\cos\left(\frac{1}{\eps}\right) $). It is interesting to note that a different notion of interval, i.e.
\begin{equation}
\label{interval}
(a,b) := \{ x\in \Rtil \mid a\le x \le b,\ a-x,\ b-x \text{ invertible}  \}
\end{equation}
on the contrary verifies the expected property $ (a,b) \cap (c,d) = (\max(a,c), \min(b,d)) $.
\\
For this reason, in our point of view, also the non-optimal property \eqref{lackIntersection} derives from 
the presence in $ \Rtil $ of zero divisors. 

We can indeed state that even if in $ \Rtil $ there are non invertible infinitesimals (see Theorem 1.2.39 in \cite{GKOS} for a characterization), only invertible infinitesimals like $ (c\eps^q) $ play a central role (see e.g.\ \cite{Gtop2}). The analogous theory of generalized functions developed using nonstandard analysis methods (see e.g.\ \cite{ToVe08}), where one fixes $ \rho \in {}^*\R_{>0} $, $ \rho \approx 0 $, and defines
\begin{align*}
\mathcal{M}_\rho &:= \{ x\in {}^*\R \mid \exists N \in \N:\ |x| \le \rho^{-N} \} \\
\mathcal{N}_\rho &:= \{ x\in {}^*\R \mid \forall n\in \N:\ |x| \le \rho^n \} \\
{}^\rho \R &:= \mathcal{M}_\rho / \mathcal{N}_\rho,
\end{align*}
is able to avoid this type of infinitesimals thanks to the use of an ultrafilter. In fact, if $ x\in \mathcal{M}_\rho \setminus \mathcal{N}_\rho $ then $ |x| \not \le \rho^n $ for some $ n\in \N $ and hence $ |x| \ge \rho^n $ because the order relation in $ {}^*\R $ is total (here the use of an ultrafilter is essential). The property $ |x| \ge \rho^n $ corresponds, intuitively, to the condition of being strictly nonzero in $ \Rtil $.
Recall that a generalized number $x\in\Rtil$ is invertible if and only if it is {\em strictly nonzero}, i.e., if and
only if there exists some $m$ such that $|x_\eps| > \eps^m$ for $\eps$ sufficiently small (\cite{GKOS}, 1.2.38). 

Avoiding zero divisors, the nonstandard theory is formally more powerful, since $ {}^\rho\C $ is algebraically closed and the Hahn-Banach theorem holds (\cite{ToVe08}), whereas in the standard version both these results fail (\cite{Ver10}). From this point of view, it is also important to recall that a version of the Hahn-Banach theorem holds for ultra-metric normed linear spaces {\em over a subfield} of $ \Rtil $ (\cite{M}).

Following this line of thought, we are more interested to consider on $ \Rtil $ the topology induced by $ \gabs{-} $ but using only balls with invertible radius (or using the notion of interval defined in \eqref{interval}), which is exactly the sharp topology, as stated in the following
\begin{theo}
\label{ThmSharpWithGabs}
A subset $ U\subseteq \Rtil $ is open in the sharp topology if and only if
\begin{equation}
\label{sharpWithGabs}
\forall x\in U\,\exists \rho\in \Rtil_{>0}:\ \rho \text{\rm\ is invertible and } B_\rho^\text{\rm g}(x)\subseteq U.
\end{equation}
\end{theo}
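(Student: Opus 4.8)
The plan is to reduce the equivalence to the statement that, at every point, the two families of balls --- the metric balls $B_r(x,d_s)=\{y\in\Rtil\mid d_s(x,y)<r\}$ with real radius $r>0$, and the balls $B_\rho^\text{\rm g}(x)=\{y\in\Rtil\mid \gabs{y-x}<\rho\}$ with invertible radius $\rho\in\Rtil_{>0}$ --- are mutually cofinal, i.e.\ each ball of one kind contains a ball of the other kind. Here $\gabs{y-x}<\rho$ is read in the partially ordered ring $\Rtil$, so it means that $\rho-\gabs{y-x}$ is strictly positive, equivalently nonnegative and invertible. Once mutual cofinality is established, both implications are purely formal: if $U$ is sharp-open and $x\in U$, I pick a real $r>0$ with $B_r(x,d_s)\subseteq U$ and then an invertible $\rho$ with $B_\rho^\text{\rm g}(x)\subseteq B_r(x,d_s)\subseteq U$, which gives \eqref{sharpWithGabs}; conversely, if \eqref{sharpWithGabs} holds and $x\in U$, I pick an invertible $\rho$ with $B_\rho^\text{\rm g}(x)\subseteq U$ and then a real $r>0$ with $B_r(x,d_s)\subseteq B_\rho^\text{\rm g}(x)\subseteq U$, showing $U$ is sharp-open. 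Translation invariance of both $d_s$ and $\gabs{-}$ lets me assume $x=0$ and argue with $z=y-x$.

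For the first cofinality claim (a $\gabs{-}$-ball contains a sharp ball) I would exploit invertibility of $\rho$: by the characterization recalled above there is an $m$ with $\rho_\eps>\eps^m$ for $\eps$ small. Setting $r:=e^{-(m+1)}$, any $y$ with $d_s(x,y)<r$ satisfies $v(x-y)>m+1$, hence $|x_\eps-y_\eps|=O(\eps^b)$ for some $b>m+1$. Then $\rho_\eps-|x_\eps-y_\eps|>\eps^m-C\eps^b=\eps^m(1-C\eps^{b-m})>\tfrac12\eps^m>\eps^{m+1}$ for $\eps$ small, so $\rho-\gabs{y-x}$ is nonnegative and invertible, i.e.\ $\gabs{y-x}<\rho$. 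Thus $B_r(x,d_s)\subseteq B_\rho^\text{\rm g}(x)$.

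For the second claim (a sharp ball contains a $\gabs{-}$-ball) I would write $r=e^{-s}$ with $s>0$ and choose the invertible radius $\rho:=[\eps^t]$ with $t>s$. If $\gabs{y-x}<\rho$ then in particular $\gabs{y-x}\le\rho$, which yields $|x_\eps-y_\eps|\le\eps^t+O(\eps^k)$ for every $k$; taking $k>t$ gives $|x_\eps-y_\eps|=O(\eps^t)$, so $v(x-y)\ge t$ and $d_s(x,y)\le e^{-t}<e^{-s}=r$. Hence $B_\rho^\text{\rm g}(x)\subseteq B_r(x,d_s)$.

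The elementary $O(\cdot)$-estimates, and the fact that the constructed $\rho$ is indeed invertible and positive, are immediate. The only point that needs genuine care --- and which I regard as the main, if modest, obstacle --- is the correct translation of the strict order ``$<$'' of the partially ordered ring $\Rtil$ into asymptotic conditions on representatives: one must use that strict positivity of $z$ means $z\ge 0$ together with invertibility ($z_\eps>\eps^{m}$ eventually), rather than mere positivity of a single representative, and one must remember that the supremum defining $v$ need not be attained, so that $v(x-y)>c$ only guarantees $|x_\eps-y_\eps|=O(\eps^b)$ for $b$ strictly below $v(x-y)$. Keeping these two subtleties straight is precisely what makes the $\tfrac12\eps^m$-type estimate go through cleanly.
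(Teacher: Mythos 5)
Your proof is correct and follows essentially the same route as the paper's: both reduce the equivalence to mutual cofinality of the sharp balls $B_r(x,d_s)$ with real radius and the balls $B_\rho^{\text{g}}(x)$ with invertible generalized radius, taking $\rho=[\eps^t]$ with $t>-\log r$ in one direction and using the characterization of invertibility by $\rho_\eps>\eps^m$ (whence $r<e^{-m}$ works) in the other. You merely spell out the asymptotic estimates and the reading of the strict order that the paper compresses into ``it is not hard to prove''.
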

\begin{proof}
Consider a ball $ B^\text{s}_r(x) $ in the sharp topology, where $ r\in\R_{>0} $. Take any $ q>- \log (r/2) $ and set $ \rho_\eps := \eps^q $, then $ \rho $ is invertible and it is not hard to prove that $ B^\text{g}_\rho(x) \subseteq B^\text{s}_r(x) $. This shows that the sharp topology is finer than the topology defined by (\ref{sharpWithGabs}). Vice versa, if we take a ball $ B^\text{g}_\rho(x) $ in the order topology, but with $ \rho $ invertible, then $ \rho_\eps \ge \eps^q $ for some representative $ (\rho_\eps) $ of $ \rho $ and some $ q \in \N $. It suffices to consider any $ r \in \R_{>0} $ such that $ \log r < -q $ to have that $ B^\text{s}_r(x) \subseteq B^\text{g}_\rho(x) $.
\end{proof}
It follows that the sharp topology can equivalently be characterized using the absolute value $ \gabs{-} $ on $ \Rtil $, which extends the ordinary one on the reals and for which very good properties hold. 
\begin{eg}
\ 
\begin{enumerate}[labelindent=\parindent,leftmargin=*,label=(\roman*),align=left ]
\item Any $ l \in \Rtil $ is the limit of $ f:\Rtil \longrightarrow \Rtil $ as $ x\to x_0 $ in the sharp topology iff
\[
\forall \eta\in \Rtil_{>0}^*\, \exists \delta\in \Rtil_{>0}^*\, \forall x\in \Rtil:\, \gabs{x-x_0}<\delta \Rightarrow \gabs{f(x)-l}< \eta.
\]
where $ \Rtil^*:=\{x \in \Rtil \,|\, x\text{ is invertible}\} $. 
This includes the case $ \eta \approx 0 $, as stated informally above.
\item If $ r\in \R $ and $ \rho\in\Rtil_{>0} $ is invertible, the trace on $ \R $ of the ball $ B_\rho^{\text{g}}(r) $ w.r.t. $ \gabs{-} $ is always
\[
B_\rho^{\text{g}}(r) \cap \R = \{ s\in \R \,|\, |s-r|<\rho \},
\]
so
\begin{align*}
B_\rho^{\text{g}}(r)  \cap \R &=(r-\rho , r+\rho) \quad \text{if}\quad\rho\in\R_{>0} \\
B_\rho^{\text{g}}(r)  \cap \R &= \{ r \}  \quad \text{if}\quad\rho \approx 0.
\end{align*}
Hence, in accordance with Proposition \ref{infNeigh}, the induced topology on $ \R $ is discrete, whereas considering only standard balls we regain the classical Euclidean topology.
\item Every sphere $ S_r(x) := \{ y\in \Rtil^n \mid d_s(y,x) = r \} $ is of course closed, but also open. This is a general result for every ultrametric space, but in our context we have 
\begin{equation}
\label{sphere}
S_r(x) = \bigcup\{ B_\rho^\text{g}(y) \mid y\in S_r(x) \ , \ \exists q>-\log r:\ \rho = [(\eps^q)] \}.
\end{equation}
In fact, if $ z \in B_\rho^\text{g}(y) $ with $ y\in S_r(x) $, then $ \gabs{z - y} < \rho $ and by Krull sharpening $ d_s(x,z) = \max\{ d_s(x,y), d_s(y,z) \} = \max(r,e^{-q})=r$. Moreover, if $ r<1 $ every ball $ B_\rho^\text{g}(y) $ is infinitesimal and hence the sphere $ S_r(x) \subseteq \mu(x) $ is an infinitesimal neighbourhood of $ x $.
\item By the proof of Lemma \ref{contlem} it follows that every $ u \in \gs(\Omega) $ is locally Lipschitz w.r.t. the Fermat topology. More precisely, if $ x_0 \in \Omega^\bullet $ and we take $ r\in \R_{>0} $ such that the closure of the Fermat ball $ U:= \{ x \in \Omega^\bullet \mid d_F(x,x_0) \le r \} $ is contained in $ \Omega^\bullet $, then
\begin{equation}
\label{CGFLocLip}
\gabs{u(x) - u(y)} \le K \cdot \gabs{x-y}\qquad \forall x,y \in U
\end{equation}
where $ K \in \Rtil $. Note that from \eqref{CGFLocLip}, a standard Lipschitz condition w.r.t.\ the sharp topology follows, i.e. $ |u(x)-u(y)|_e \le k \cdot |x-y|_e $, with $ k\in \R $.
\end{enumerate}
\end{eg}
The strict relations between order and sharp topology imply that strange functions like
\begin{equation}
\label{contDisc}
i(x) := 
\begin{cases}
1 & \text{if $ x \approx 0 $} \\
0 & \text{otherwise}
\end{cases}
\end{equation}
are continuous. Once again this is a general result.
\begin{prop}
Under the assumptions of Proposition \ref{infNeigh}, let $ \delta \approx 0 $ be a positive infinitesimal, $ \delta \in R_{>0} $, such that
$$
\forall x, x_0\in R:\ x_0 - \delta < x < x_0 + \delta \ \Rightarrow\  \left( x_0 \approx 0 \iff x \approx 0\right). 
$$
Then
$$
\forall x, x_0 \in R:\ x_0 - \delta < x < x_0 + \delta \ \Rightarrow\ i(x)=i(x_0),
$$
where $ i $ is the function defined in (\ref{contDisc}).
\end{prop}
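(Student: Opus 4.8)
The plan is to recognize that the conclusion is essentially a reformulation of the hypothesis once the definition of $i$ is unwound. Since $i$ takes only the two values $0$ and $1$, with $i(z)=1$ precisely when $z\approx 0$ and $i(z)=0$ otherwise, the equality $i(x)=i(x_0)$ holds if and only if the biconditional $x\approx 0 \iff x_0\approx 0$ is true. This biconditional is exactly the consequence supplied by the standing assumption on $\delta$, so the argument reduces to matching the two statements.

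Concretely, I would fix $x,x_0\in R$ satisfying $x_0-\delta < x < x_0+\delta$ and split into two cases according to whether $x_0\approx 0$. If $x_0\approx 0$, then the assumed implication $x_0\approx 0 \iff x\approx 0$ yields $x\approx 0$, so that $i(x)=1=i(x_0)$. If instead $x_0\not\approx 0$, then the reverse direction of the same biconditional forces $x\not\approx 0$ (for $x\approx 0$ would entail $x_0\approx 0$), whence $i(x)=0=i(x_0)$. In either case $i(x)=i(x_0)$, which is precisely what is to be shown.

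The real content of the statement lies not in this deduction but in the hypothesis itself, which encodes that the $\delta$-interval around any point lies entirely inside or entirely outside the monad $\mu(0)$; this is exactly the condition that makes $i$ locally constant, and hence continuous in the order topology generated by such infinitesimal intervals. I therefore expect no genuine obstacle in the proof: once one observes that $i(x)=i(x_0)$ is logically equivalent to the invariance of the relation ``$\approx 0$'' under passage within the $\delta$-interval, the conclusion follows immediately from the assumption. The value of the proposition is conceptual, confirming in this general ordered-ring setting that the apparently discontinuous indicator $i$ of the monad of $0$ is in fact continuous, thereby paralleling and generalizing the concrete phenomenon observed for $\Rtil$.
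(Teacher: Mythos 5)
Your proof is correct and matches the paper's intent: the paper in fact states this proposition without any proof, treating it as an immediate unwinding of the definition of $i$, which is exactly what you do. The case split on whether $x_0\approx 0$ and the observation that $i(x)=i(x_0)$ is logically equivalent to the biconditional $x\approx 0 \iff x_0\approx 0$ supplied by the hypothesis is all that is needed.
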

\noindent Therefore, the function $ i $ is continuous in the sharp topology, too.
This also shows that an intermediate value theorem for continuous functions in the sharp topology cannot hold. However, let us note that the function $ i $ is still locally Lipschitz, both in the sharp and in the Fermat topology (take $ K $ any infinite number in \eqref{CGFLocLip}).

Proceeding as above, we can analogously consider the spaces $ \Rtil^n $ and $ \gs(\Omega) $ defining
\begin{align*}
\Vert x \Vert_\text{g} &:= \left(\sum_{i=1}^n x_i^2\right)^{1/2}\in \Rtil \quad \text{if} \quad x\in \Rtil^n \\
\Vert u \Vert_{K\alpha}^\text{g} &:= \left[ \sup_{x\in K}\Vert \partial^\alpha u_\eps(x) \Vert \right] \in \Rtil
\quad \text{if}\quad u\in \gs(\Omega), K\Subset\Omega, \alpha \in \N_0^n.
\end{align*}
More generally, let $E$ be a locally convex vector space whose topology is induced by the family
of seminorms $(p_\alpha)_{\alpha\in A}$. Then the $p_\alpha$ uniquely extend to maps $p_\alpha^\text{g}: \gs_E\to \Rtil$
and the reasoning of the proof of Theorem \ref{sharpWithGabs} shows that the sharp topology on $\gs_E$ is
generated by the sets $\{u\in \gs_E \mid p_\alpha^\text{g}(u) < \rho\}$, where $\rho$ runs through all invertible
elements of $\Rtil_{>0}$ and $\alpha\in A$.

\section{The Fermat-Reyes theorem in $\gs(\Om)$}
As a prerequisite for our investigation of the Fermat-Reyes theorem we first demonstrate that
a further reduction of the set of generalized points characterizing any Colombeau generalized function is feasible.
\begin{theo} 
\label{invth}
Let $u\in \gs(\R)$. Then $u=0$ if and only if $u(x)=0$ for all $x\in \R^\bullet$ that are invertible.
\end{theo}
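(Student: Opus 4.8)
The plan is to prove the nontrivial implication by contraposition: assuming $u\neq 0$ in $\gs(\R)$, I will exhibit a single invertible near-standard point $\tilde x\in\R^\bullet$ with $u(\tilde x)\neq 0$. The converse is immediate, since $u=0$ gives $u(x)=[u_\eps(x_\eps)]=0$ for every $x$. First I would use that negligibility in $\gs$ can be tested at the level of the function itself, without derivatives (\cite{GKOS}): hence $u\neq 0$ means that some representative $(u_\eps)$ fails $\sup_{K}|u_\eps|=O(\eps^m)$ for a suitable $K\comp\R$ and some $m\in\N$. Unwinding the negation of this Landau condition, I extract a strictly decreasing sequence $\eps_j\to 0^+$ and points $x_j\in K$ with $|u_{\eps_j}(x_j)|> j\,\eps_j^m$.

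Next I would pass to convergent data. Since $K$ is compact, after thinning I may assume $x_j\to x_*\in K$; by the pigeonhole principle I thin further so that all $x_j$ lie on one side of the origin, say $x_j\ge 0$ (the case $x_j\le 0$ is symmetric, replacing $\max$ and $\eps^p$ below by $\min$ and $-\eps^p$). The core of the construction is to turn the discrete family $(x_j)$ into a continuous, near-standard, invertible net whose value at the scales $\eps_j$ stays close to $x_j$. To this end I fix a continuous interpolation $\bar x:(0,1]\to\R$ with $\bar x_{\eps_j}=x_j$ and $\bar x_\eps\to x_*$, put $p:=m+N+2$, where $N$ bounds the first derivatives of $u_\eps$ on a neighbourhood of $K$ by moderateness, and define
\[
\tilde x_\eps:=\max(\bar x_\eps,\ \eps^p).
\]
Then $\tilde x_\eps$ is continuous in $\eps$ and satisfies $\tilde x_\eps\ge \eps^p>\eps^{p+1}$, so $\tilde x:=[\tilde x_\eps]$ is strictly nonzero, hence invertible; moreover $\tilde x_\eps\to\max(x_*,0)=x_*$, so $\tilde x\in\R^\bullet$ is near-standard.

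Finally I would verify $u(\tilde x)\neq 0$. Because $x_j\ge 0$, the truncation moves the sampled points only slightly: $0\le\tilde x_{\eps_j}-x_j\le\eps_j^p$. Moderateness gives $\sup|u_{\eps_j}'|\le C\eps_j^{-N}$ on a compact set containing the relevant segments, so by the mean value theorem $|u_{\eps_j}(\tilde x_{\eps_j})-u_{\eps_j}(x_j)|\le C\,\eps_j^{-N}\eps_j^{p}=C\,\eps_j^{m+2}$. Hence $|u_{\eps_j}(\tilde x_{\eps_j})|\ge j\,\eps_j^m-C\eps_j^{m+2}\ge (j-1)\eps_j^m$ for large $j$, and since $j-1\to\infty$ the net $(u_\eps(\tilde x_\eps))$ is not $O(\eps^m)$; thus $u(\tilde x)\neq 0$ in $\Rtil$, completing the contrapositive.

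I expect the main obstacle to be precisely the interaction between invertibility and the requirement that representatives depend continuously on $\eps$: when $x_*=0$ the witnessing point is infinitesimal and may a priori be non-invertible, and a naive pointwise fix (setting the net equal to $\pm\eps^p$ wherever it is too small) either destroys continuity or, if the $x_j$ change sign, spoils the estimate through the factor $\eps_j^{-N}$. Restricting to a one-signed subsequence and using the continuous truncation $\max(\bar x_\eps,\eps^p)$ resolves both difficulties at once, which is why I would organise the argument around these two reductions.
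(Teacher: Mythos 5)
Your proof is correct, but it takes a genuinely different route from the paper's. The paper first reduces to the known fact that $u$ is determined by its values on near-standard points, picks $x\in\R^\bullet$ with $u(x)\neq 0$ and $\st{x}=0$, and then must establish the intermediate claim (\ref{star}) that some subsequence of the $|x_{\eps_k}|$ stays above a fixed power $\eps_{k_l}^{m_2}$ --- which itself requires an auxiliary contradiction argument (interpolating a representative of $0\in\Rtil$ on which $u$ fails to vanish, against $u(0)=0$ obtained from sharp continuity) --- before building the invertible witness as a polygon through these points and propagating the lower bound $|y_\eps|\ge\eps^{m_2}$ by a convexity argument. You instead start directly from the failure of the zeroth-order negligibility test (correctly citing that derivatives need not be tested, \cite{GKOS}), and your truncation $\tilde x_\eps=\max(\bar x_\eps,\eps^p)$ with $p=m+N+2$ forces invertibility by hand, while the mean value theorem together with moderateness of $u_\eps'$ shows the sampled values move only by $O(\eps_j^{m+2})$, which is swamped by the margin $j\eps_j^m$. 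This sidesteps claim (\ref{star}) entirely and avoids invoking the near-standard point value characterization, at the price of fixing $p$ in terms of both $m$ and the moderateness exponent $N$. Both arguments need the reduction to a one-signed subsequence --- yours so that the truncation is a perturbation of size at most $\eps_j^p$, the paper's to make the monotone polygon construction work. Your version is shorter and more self-contained; the paper's exhibits the witness as a deformation of an actual near-standard point where $u$ is already known not to vanish, which fits its surrounding discussion of point values.
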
 
\begin{proof} Since $u$ is entirely determined by its point values on elements of $\R^\bullet$ it suffices to show
that if $u(x)=0$ for all invertible $x\in \R^\bullet$ then the same follows for all $x\in \R^\bullet$. 
To see this we first note that $u(0)=0$: Let $x_k:=[\eps^k]$. Then each $x_k$ is invertible and $|x_k|_e=e^{-k}\to 0$
as $k\to \infty$. By Lemma \ref{contlem}, $0=u(x_k)\to u(0)$.

Suppose now that $u\not=0$. Then there exists some $x\in \Rtil_c$ with $u(x)\not=0$ and by the above $x\not=0$, but
$\st{x} = 0$ (otherwise $x$ would be invertible).
Since $u(x)\not=0$, there exists a representative $(u_\eps)$ of $u$, some $m_1$ and a sequence $1 >\  \eps_k\searrow 0$
such that $|u_{\eps_k}(x_{\eps_k})|\ge \eps_{k}^{m_1}$ for all $k$.  Since $\st{x}=0$, we can suppose to have chosen $(\eps_k)_k$ so that $0\le x_{\eps_{k+1}} < x_{\eps_k}$ or $x_{\eps_k} < x_{\eps_{k+1}} \le 0$. We will proceed in the first case, the second being analogous.

We will show that some subsequence of $x_{\eps_k}$
gives rise to an invertible generalized number by first establishing that 
\begin{equation}\label{star}
\exists m_2 \ \forall l \ \exists k_l\ge l: \ |x_{\eps_{k_l}}| \ge \eps_{k_l}^{m_2}
\end{equation}
Suppose to the contrary that $\forall m$ $\exists l_m$ $\forall k\ge l_m:$ $|x_{\eps_k}| < \eps_k^m$. We can assume that $l_{k+1} > l_k$. Construct inductively a smooth map $\eps \mapsto r_\eps$ as follows: To begin with, connect the points
$x_{\eps_{l_1}}< \cdots < x_{\eps_{l_0}}$ by some smooth curve $r$ ($r_{\eps_i} = x_{\eps_i}$ for $l_0\le i\le l_1$) in such a way that $|r_\eps|< \eps^0$ on $[\eps_{l_1}, \eps_{l_0}]$; note that $|x_{\eps_{l_0}}| < \eps_{l_0}^0$ and $|x_{\eps_{l_1}}| < \eps_{l_1}^1 < 1$.  In the next step, extend $r$ smoothly through the points $x_{\eps_{l_2}}< \cdots < x_{\eps_{l_1}}$ in such a way that $|r_\eps|<\eps^1$ on $[\eps_{l_2},\eps_{l_1}]$; note that $|x_{\eps_{l_2}}| < \eps_{l_2}^2 < \eps_{l_2}$. Iterating this procedure we obtain a smooth map $r:(0,1] \to \R$ such that  $r_{\eps_{l_m}} = x_{\eps_{l_m}}$ for all $m\in\N_0$ and $|r_\eps|<\eps^m$ for $\eps<\eps_{l_m}$. Thus $r$ is a representative of $0\in \Rtil_c$. However, 
$$
|u_{\eps_{l_m}}(r_{\eps_{l_m}})| = |u_{\eps_{l_m}}(x_{\eps_{l_m}})| \ge \eps_{l_m}^{m_1}
$$
for all $m\in \N_0$, so $u(0) = u(r) \not=0$, a contradiction.

This proves the existence of $m_2$ as claimed and by extracting a subsequence we may additionally assume 
that $k_l < k_{l+1}$ for all $l$ in (\ref{star}). 
Let $\vphi: \R\to [0,1]$ be a smooth map such that $\vphi(t) = 0$ for $t \ \le\  0$ and $\vphi(t)=1$ for $t\ge 1$.
Now define $y:(0,1] \to \R$ by
$$
y_\eps := \left\{ 
\begin{array}{ll}
x_{\eps_{k+1}} + \vphi\left(\frac{\eps-\eps_{k+1}}{\eps_k - \eps_{k+1}}\right) (x_{\eps_k} - x_{\eps_{k+1}}) & \eps_{k+1} \le \eps \le \eps_k\\
x_{\eps_1} & \eps_1\le \eps \le 1
\end{array}
\right.
$$
Then $y$ is a smooth parametrization of the polygon through the $x_{\eps_k}$ (cf.\ the special curve lemma in \cite {KM}, 2.8).
Since $|y_{\eps_k}| = |x_{\eps_k}| \ge \eps_k^{m_2}$ for all $k$ and since $\{(\eps,x)\in (0,1] \times \R \mid x\ge \eps^{m_2}\}$ is convex,
$|y_\eps|\ge \eps^{m_2}$ for all $\eps\in (0,1]$. Hence $(y_\eps)$ represents an invertible element $y$ of $\Rtil_c$. Moreover, 
for $\eps\le \eps_k$, $|y_\eps| \le |x_{\eps_k}| + 2|x_{\eps_{k+1}}|$, so $y\in\R^\bullet$ with $\st{y} = 0$.
But $u(y)\not=0$ since
$$
|u_{\eps_{k_l}}(y_{\eps_{k_l}})| = |u_{\eps_{k_l}}(x_{\eps_{k_l}})| \ge \eps_{k_l}^{m_1},
$$
a contradiction.
\end{proof}
\begin{cor} \label{invcor} Let $\Omega\subseteq \R^n$ be open and let $u\in \gs(\Omega)$. Then $u=0$ if and only if $u(x)=0$ for all $x\in \obull$ with $| x | $ invertible in $\Rtil$.
\end{cor}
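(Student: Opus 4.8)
The \emph{only if} direction is immediate. For the converse I would first invoke the point value characterization on near-standard points recalled in the introduction, which reduces the claim to showing that $u(x)=0$ for \emph{every} $x\in\obull$. So fix a near-standard $x$ with representative $(x_\eps)$ and $\st{x}=a=(a_1,\dots,a_n)\in\Omega$. My plan is to reduce the vanishing of $u$ at $x$ to the one-dimensional statement of Theorem \ref{invth} by freezing all but the first coordinate at their ($\eps$-dependent) values.

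Concretely, since $\Omega$ is open I would choose a box $J_1\times\cdots\times J_n\subseteq\Omega$ containing $a$, shrinking it so that $0\in J_1$ exactly when $a_1=0$. For $\eps$ small enough that $x_\eps^{(j)}\in J_j$ for $j\ge2$, set $v_\eps(t):=u_\eps(t,x_\eps^{(2)},\dots,x_\eps^{(n)})$ on $J_1$, and extend $(v_\eps)$ in an arbitrary moderate way for the remaining $\eps$. Moderateness of $u$ on a compact neighbourhood of the form $\overline{J_1}\times\cdots\times\overline{J_n}$ then shows $v:=[v_\eps]\in\gs(J_1)$, and by construction $v(x^{(1)})=[u_\eps(x_\eps)]=u(x)$, where $x^{(1)}:=[x_\eps^{(1)}]\in J_1^\bullet$. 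Composing with a diffeomorphism $\psi\colon\R\to J_1$, chosen with $\psi(0)=0$ in case $0\in J_1$, I obtain $w:=v\circ\psi\in\gs(\R)$, and $w=0$ if and only if $v=0$.

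The crux is then to verify the hypothesis of Theorem \ref{invth} for $w$, namely that $w(r)=0$ for every invertible $r\in\R^\bullet$. For such an $r$ one has $w(r)=u(\tilde s)$ with $\tilde s:=[(\psi(r_\eps),x_\eps^{(2)},\dots,x_\eps^{(n)})]\in\obull$, whose standard part is $(\psi(\st r),a_2,\dots,a_n)\in J_1\times\cdots\times J_n\subseteq\Omega$. Since $|\tilde s|\ge[\,|\psi(r_\eps)|\,]$, it suffices to see that this lower bound is invertible: then $|\tilde s|$ is invertible, the hypothesis of the corollary applies and gives $w(r)=u(\tilde s)=0$. Granting this, Theorem \ref{invth} yields $w=0$, hence $v=0$ and finally $u(x)=v(x^{(1)})=0$, as desired.

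The step I expect to require the most care is precisely this invertibility of $|\tilde s|$, since it is where the condition ``$|x|$ invertible'' of the corollary must be produced. If some $a_j\neq0$ with $j\ge2$, the norm is bounded below by the noninfinitesimal number $[\,|x_\eps^{(j)}|\,]$; and if $a_1\neq0$ then $\psi(r_\eps)\in J_1$ stays bounded away from $0\in\R$, so $[\,|\psi(r_\eps)|\,]$ is invertible. The genuinely infinitesimal case is $a=0$, where all frozen coordinates are infinitesimal and invertibility must come from the first coordinate alone: here $\psi(0)=0$, and writing $|r_\eps|\le M$ (near-standardness) and $|r_\eps|>\eps^m$ (invertibility of $r$) for small $\eps$, the mean value theorem gives $|\psi(r_\eps)|=|\psi(r_\eps)-\psi(0)|\ge C|r_\eps|>\eps^{m+1}$ with $C:=\min_{[-M,M]}\psi'>0$, so $[\,|\psi(r_\eps)|\,]$ is invertible again. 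This is exactly why a diffeomorphism with non-vanishing derivative (and $\psi(0)=0$) is used. The remaining verifications — that $v$ is a well-defined Colombeau function and that point values behave correctly under the coordinate freezing and under the pullback along $\psi$ — are routine, and I would only sketch them.
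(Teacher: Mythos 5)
Your proof is correct, and its engine is the same as the paper's: freeze all coordinates but the first at the ($\eps$-dependent) values of the given point, observe that invertibility of the norm of the resulting test points can be read off from a single coordinate, and invoke Theorem \ref{invth}. The packaging differs in two ways worth recording. The paper first extends Theorem \ref{invth} to arbitrary open $\Omega\subseteq\R$ (trivially if $0\notin\Omega$, otherwise by rerunning its proof) and then, for $n>1$, freezes the last $n-1$ coordinates of an \emph{arbitrary} $x\in\otilc$ to obtain a one-variable generalized function on $\mathrm{pr}_1(\Omega)$. You instead reduce at the outset to near-standard $x$ via the point-value characterization on $\obull$, work inside a product box around $\st{x}$, and transport the one-variable problem to all of $\R$ by a diffeomorphism $\psi$ normalized by $\psi(0)=0$; the mean-value estimate $|\psi(r_\eps)|\ge C|r_\eps|$ is exactly what makes $\psi$ compatible with invertibility, so Theorem \ref{invth} applies verbatim on $\R$ with no need to extend it to open subsets. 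Your preliminary restriction to near-standard $x$ is in fact the more careful option: the hypothesis of the corollary concerns only points of $\obull$, and freezing the coordinates of a merely compactly supported point need not produce near-standard test vectors, whereas in your setup the frozen coordinates converge to the components of $\st{x}$ and the test points $\tilde s$ stay in $\obull$ by construction. The only cosmetic slips are that $J_1$ should be chosen with $0\notin\overline{J_1}$ when $a_1\ne 0$ (so that $\psi(r_\eps)$ is genuinely bounded away from $0$), and that $C$ should be $\min_{[-M,M]}|\psi'|$ (or one takes $\psi$ increasing); neither affects the argument.
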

\begin{proof} We first extend the validity of Th.\ \ref{invth} to open subsets $\Omega$ of $\R$. For $0\not\in \Omega$, every element of 
$\obull$ is invertible, so the result is obvious. Otherwise, the proof of Th.\ \ref{invth} applies. 

Now let $\Omega\subseteq \R^n$ ($n>1$) and suppose that $u(x)=0$ for all $x\in \obull$ with $ \Vert x \Vert $ invertible. Let $x\in \otilc$ arbitrary and consider the map $\tilde u :=y\mapsto [u_\eps(y,x_{2,\eps},\dots,x_{n,\eps})] \in \gs(\mathrm{pr}_1(\Omega))$. If $y\in \mathrm{pr}_1(\Omega)^\bullet$ is invertible, then so is $| [y_\eps,x_{2\eps},\dots,x_{n\eps}] |$, hence $\tilde u(y)=0$. It follows that $\tilde u = 0$ in $\gs(\mathrm{pr}_1(\Omega))$.
Therefore, $u(x)=0$ for all $x\in \otilc$, i.e., $u=0$.
\end{proof}
The Fermat-Reyes theorem states the existence and uniqueness of a generalized function acting as the incremental ratio of a given $ f\in \gs(U) $. The natural domain of definition of this incremental ratio is given by the following
\begin{definition} For $a$, $b\in \R^n$ denote by $\overrightarrow{[a,b]}$ the segment 
$\{a+s(b-a) \mid s\in \R \ , \ 0\le s \le 1\}$. For $U\subseteq \R$ open, the {\em thickening}
of $U$ is
$$
\thick{U} := \{(x,h)\in \R^{2n} \mid \overrightarrow{[x,x+h]} \subseteq U \}.
$$ 
\end{definition}
\noindent The thickening $ \thick{U} $ is an open subset of $ \R^{2n} $ (see \cite{AMR}).
\begin{lem} 
\label{nearStdThickIsOpen}
For any $U\subseteq \R^n$ open, the set 
$$
\thick{U}^\bullet = \{ (x,h) \in (\R^{2n})^\bullet \mid (\st{x},\st{h}) \in \thick{U} \}
$$
of near standard points of the thickening is open in the Fermat topology on $(\R^{2n})^\bullet$ (hence also in the $ \omega $- and in the sharp topology by Proposition \ref{F-omega-sharp-top}).
\end{lem}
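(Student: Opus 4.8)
The plan is to identify the Fermat topology on $(\R^{2n})^\bullet$ with the initial topology induced by the standard part map and then to recognise $\thick{U}^\bullet$ as the preimage of $\thick{U}$ under this map. Concretely, I would write $\sigma:(\R^{2n})^\bullet \to \R^{2n}$, $\sigma(x,h):=(\st{x},\st{h})$, for the standard part map on the $2n$-dimensional space. By the very definition of the Fermat pseudometric one has $d_F((x,h),(x',h')) = |\sigma(x,h)-\sigma(x',h')|$, so $d_F$ is nothing but the pull-back under $\sigma$ of the Euclidean distance on $\R^{2n}$, and the Fermat topology is exactly the initial topology for $\sigma$. Moreover, directly from the definitions, $(x,h)\in \thick{U}^\bullet$ holds if and only if $\sigma(x,h)\in \thick{U}$, that is, $\thick{U}^\bullet = \sigma^{-1}(\thick{U})$.

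Given this identification, the openness is a routine continuity argument. First I would invoke that $\thick{U}$ is open in $\R^{2n}$ (cited from \cite{AMR}). Then, to verify openness of $\thick{U}^\bullet$ in the Fermat topology, I would fix an arbitrary $(x_0,h_0)\in \thick{U}^\bullet$, so that $\sigma(x_0,h_0)\in \thick{U}$; by openness of $\thick{U}$ there is some $r>0$ such that the Euclidean ball of radius $r$ about $\sigma(x_0,h_0)$ is contained in $\thick{U}$. For any $(x,h)$ in the Fermat ball $B_r((x_0,h_0),d_F)$ one then has $|\sigma(x,h)-\sigma(x_0,h_0)| = d_F((x,h),(x_0,h_0)) < r$, whence $\sigma(x,h)\in \thick{U}$ and therefore $(x,h)\in \thick{U}^\bullet$. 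This shows $B_r((x_0,h_0),d_F)\subseteq \thick{U}^\bullet$, so $\thick{U}^\bullet$ is Fermat-open. The claim for the $\omega$- and sharp topologies is then immediate: by Proposition \ref{F-omega-sharp-top} both are finer than the Fermat topology, so every Fermat-open set is a fortiori open for them as well.

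There is essentially no genuine obstacle here; the single external input is the openness of $\thick{U}$ in $\R^{2n}$, which is already available. The only point deserving care is the book-keeping with the standard part map on the $2n$-dimensional space: one must make sure that the Euclidean norm underlying $d_F$ on $(\R^{2n})^\bullet$ is the full norm of $(\st{x},\st{h})$ rather than a componentwise comparison, but this is precisely how $d_F$ is defined. I would also remark that it is irrelevant that $d_F$ is merely a pseudometric (the Fermat topology fails to be Hausdorff, cf.\ Remark \ref{toprem}), since balls still form a basis of the topology and the preimage argument is entirely unaffected.
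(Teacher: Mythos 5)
Your proof is correct, and it takes a cleaner route than the paper's. The paper argues directly at the level of a fixed point $(x,h)\in\thick{U}^\bullet$: it forms the compact segment $K=\{\st{x}+s\st{h}\mid 0\le s\le 1\}\subseteq U$, sets $2a:=d(K,\R^n\setminus U)>0$, and checks by the triangle inequality that $d_F(x,y)<a/2$ and $d_F(h,k)<a/2$ force $\overrightarrow{[\st{y},\st{y}+\st{k}]}\subseteq U$ — in effect re-deriving the openness of $\thick{U}$ in $\R^{2n}$ inside the proof, even though that fact was just cited from \cite{AMR}. You instead isolate the structural point: $d_F$ on $(\R^{2n})^\bullet$ is by definition the pull-back of the Euclidean distance under the standard part map $\sigma(x,h)=(\st{x},\st{h})$, and $\thick{U}^\bullet=\sigma^{-1}(\thick{U})$ by the very definition in the statement, so openness is just continuity of $\sigma$ applied to the already-known openness of $\thick{U}$. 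This buys generality (it shows at once that $V^\bullet=\sigma^{-1}(V)$ is Fermat-open for every open $V\subseteq\R^{2n}$) and avoids redoing the compactness argument; the paper's version is self-contained and makes the quantitative radius $a/2$ explicit, which is then reused in the uniqueness part of the Fermat--Reyes theorem. Your remarks that the pseudometric (non-Hausdorff) character of $d_F$ is harmless and that the full Euclidean norm on $(\st{x},\st{h})$ versus componentwise balls gives the same topology are both accurate; the conclusion for the $\omega$- and sharp topologies via Proposition \ref{F-omega-sharp-top} is handled identically in both arguments.
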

\begin{proof}
Take $ (x,h)\in\thick{U}^\bullet $, then $ K:=\{\st{x}+s\cdot \st{h} \mid s\in\R \ , \ 0\le s \le 1\} $ is compact and contained in the open set $ U $. Let $ 2a:=d(K,\R^n\setminus U)>0 $ be the distance of $ K $ from the complement of $ U $, so that $ B_a(c)\subseteq U $ for every $ c\in K $. Let $(y,k)\in  (\R^{2n})^\bullet$ such that $d_F(x,y)<a/2$ as well as $d_F(h,k)<a/2$, then we claim that $ (y,k) \in \thick{U}^\bullet $; in fact, if $ s\in \R $, $ 0\le s \le 1 $, then
\begin{align*}
|\st{y}+s\st{k} - \st{x}-s\st{h}| &\le |\st{y}-\st{x}|+|s|\cdot |\st{k}-\st{h}| \\
          & < \frac{a}{2}+1\cdot \frac{a}{2}=a.
\end{align*}
Therefore, $ \st{y}+s\st{k} \in B_a(c) \subseteq U$ where $ c= \st{x}+s\st{h} \in K$ from our definition of the compact set $ K $, and hence $ \overrightarrow{[\st{y},\st{y}+\st{k}]}\subseteq U $, i.e. $ (\st{y},\st{k})\in \thick{U} $.
\end{proof}
\noindent Let us note that if $ x\in U^\bullet $ and $ h\approx 0 $, then $ (x,h) \in \thick{U}^\bullet $.

The following is the Fermat-Reyes theorem in the context of Colombeau generalized functions.
\begin{theo} 
\label{FRtheorem}
Let $U$ be open in $\R$ and let $f\in \gs(U)$. Then there exists a unique $r\in \gs(\thick{U})$
 such that
\begin{equation}\label{fr}
f(x+h) = f(x) + h\cdot r(x,h) \qquad \forall (x,h)\in \thick{U}^\bullet
\end{equation}
and $r(x,0) = f'(x)$.
\end{theo}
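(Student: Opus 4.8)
The plan is to write down the candidate incremental ratio suggested by the classical Hadamard formula and then treat existence and uniqueness separately; uniqueness is the delicate point, because the coordinate function $h$ fails to be invertible near $h=0$ and hence cannot be cancelled.

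For existence I would fix a representative $(f_\eps)$ of $f$ and set, for $(x,h)\in\thick{U}$,
\[
r_\eps(x,h):=\int_0^1 f_\eps'(x+sh)\,ds .
\]
First one checks $(r_\eps)\in\esm(\thick{U})$: on a compact $L\comp\thick{U}$ all segments $\overrightarrow{[x,x+h]}$ stay in a fixed compact $K\comp U$, and differentiating under the integral sign writes $\partial^\beta r_\eps$ as an integral of derivatives of $f_\eps$ along such segments, so the moderateness estimates for $(f_\eps)$ on $K$ yield those for $(r_\eps)$ on $L$. Running the same estimate with $\ns$ in place of $\esm$ (using that $\ns(U)$ is stable under differentiation) shows that the class $r:=[r_\eps]\in\gs(\thick{U})$ is independent of the chosen representative. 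For every $\eps$ and every $(x,h)\in\thick{U}$ the fundamental theorem of calculus gives the \emph{exact} pointwise identity $f_\eps(x+h)-f_\eps(x)=h\,r_\eps(x,h)$; evaluating it at a near-standard point $(x,h)\in\thick{U}^\bullet$, whose representatives lie in $\thick{U}$ for small $\eps$ since $\thick{U}$ is open and $(\st{x},\st{h})\in\thick{U}$, produces the identity \eqref{fr} in $\Rtil$. Finally $r_\eps(x,0)=f_\eps'(x)$, so $r(x,0)=f'(x)$ (which makes sense by the remark after Lemma \ref{nearStdThickIsOpen}), and existence is settled.

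For uniqueness let $r_1,r_2$ both satisfy \eqref{fr} and put $g:=r_1-r_2\in\gs(\thick{U})$. Subtracting the two functional equations gives $h\cdot g(x,h)=0$ for every $(x,h)\in\thick{U}^\bullet$; since the product $h\cdot g$ (the coordinate $h$ times $g$) is an element of $\gs(\thick{U})$ vanishing at all near-standard points, the point value characterization on near-standard points forces $h\cdot g=0$ in $\gs(\thick{U})$. Differentiating this in $h$ and restricting to $h=0$ (i.e. reading off the $h=0$ component of $g+h\,\partial_h g$) yields $g(\cdot,0)=0$ in $\gs(U)$. Now I argue at the level of representatives: fix $(g_\eps)$ and a compact $L\comp\thick{U}$; since for a moderate net negligibility is detected by its zeroth-order estimates, it suffices to bound $\sup_L|g_\eps|$. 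If this failed there would be $m_0$, a sequence $\eps_k\to 0$, and points $(x_k,h_k)\in L$ with $|g_{\eps_k}(x_k,h_k)|\ge \eps_k^{m_0}$. Negligibility of $h\cdot g$ forces $|h_k|=O(\eps_k^{\,p})$ for every $p$, whereas a first-order Taylor expansion in $h$ about $(x_k,0)$, combined with negligibility of $g(\cdot,0)$ and moderateness of $\partial_h g_\eps$, gives
\[
\eps_k^{m_0}\le |g_{\eps_k}(x_k,h_k)|\le |g_{\eps_k}(x_k,0)|+|h_k|\cdot\sup_{L'}|\partial_h g_{\eps_k}|\le D\,\eps_k^{\,m_0+1}
\]
for a suitable constant $D$ and a slightly enlarged compact $L'\comp\thick{U}$, a contradiction. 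Hence $g\in\ns(\thick{U})$, i.e. $r_1=r_2$.

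I expect the main obstacle to be exactly this uniqueness step. Because $h$ is a zero divisor, one cannot divide directly; and although Corollary \ref{invcor} reduces matters to points $(x,h)$ with $\gabs{(x,h)}$ invertible, these still include points with $h\approx 0$ non-invertible (e.g. $x$ a unit and $h$ an oscillating infinitesimal), which cannot be reached from invertible-$h$ points by a continuity argument, since the invertible elements of $\Rtil$ are not dense in the sharp topology. This is what forces the quantitative representative estimate above, whose essential input is that moderateness of $(g_\eps)$ prevents a non-negligible spike of $g_\eps$ from sitting at a negligibly small value of $h$. Existence, by contrast, is a routine transfer of the classical construction once $\thick{U}^\bullet$ has been identified as the correct near-standard domain.
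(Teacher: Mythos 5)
Your proof is correct, and the existence half coincides with the paper's (same net $r_\eps(x,h)=\int_0^1 f_\eps'(x+sh)\,ds$, same moderateness check). The uniqueness half, however, takes a genuinely different route. The paper fixes $(x,h)\in\thick{U}^\bullet$, uses Lemma \ref{nearStdThickIsOpen} to freeze the first variable and form the one-variable generalized functions $\rho(k):=r(x,k)$ and $\tilde\rho(k):=\tilde r(x,k)$ on an interval $K$ around $\st{h}$, cancels the factor $k$ at \emph{invertible} $k\in K^\bullet$, and then invokes Corollary \ref{invcor} twice; the whole argument therefore rests on Theorem \ref{invth}, the most technical result of that section. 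You instead use only the plain near-standard point value characterization to get $h\cdot g=0$ in $\gs(\thick{U})$ and then cancel $h$ by a quantitative estimate on representatives: a non-negligible spike $|g_{\eps_k}(x_k,h_k)|\ge\eps_k^{m_0}$ forces $|h_k|=O(\eps_k^p)$ for all $p$, and the Taylor expansion in $h$ about $h=0$ (legitimate because $\thick{U}$ is star-shaped in $h$, so the star hull $L'=\{(x,th)\mid (x,h)\in L,\ 0\le t\le 1\}$ is a compact subset of $\thick{U}$, and $\mathrm{pr}_1(L)\comp U$) together with $g(\cdot,0)=0$ yields the contradiction $\eps_k^{m_0}\le D\eps_k^{m_0+1}$. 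This is self-contained and entirely bypasses Theorem \ref{invth}/Corollary \ref{invcor}; your closing remark correctly identifies why Corollary \ref{invcor} applied directly in two variables would not suffice (invertibility of $\|(x,h)\|$ does not give invertibility of $h$), which is precisely what forces the paper's freeze-the-first-variable reduction. What the paper's route buys in exchange is brevity once Corollary \ref{invcor} is available, and a demonstration of the new point value characterization that is one of the article's announced contributions. The only points to spell out in your write-up are the standard fact that for moderate nets the zeroth-order estimate suffices for negligibility, and the explicit choice of $L'$ above; both are routine.
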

\begin{proof} To show existence, set $r_\eps(x,h) :=  \int_0^1 f_\eps'(x+sh)\,ds$ for every $ (x,h) \in \thick{U} $.
Then each $r_\eps$ is smooth, $r_\eps(x,0) = f_\eps'(x)$, $(r_\eps)$ is moderate,  and its class 
$r$ clearly satisfies (\ref{fr}). 

To show uniqueness, suppose that $\tilde{r}$ is another incremental ratio, and fix $ (x,h) \in \thick{U}^\bullet $. By Lemma \ref{nearStdThickIsOpen} we can find $ a>0 $ such that $ d_F(x,y)<a $ and $ d_F(h,k)< a $ imply $ (y,k) \in \thick{U}^\bullet $. On the open interval $ K:=(\st{h}-a,\st{h}+a) $ we can consider the generalized functions $ \rho_\eps(k) := r_\eps(x_\eps,k) $ and $ \tilde{\rho}_\eps(k) := \tilde{r}_\eps(x_\eps,k) $. Note that if $ k\in K $ then $ (x,k) \in \thick{U}^\bullet $; moreover, since $ (x_\eps) $ is near standard, both $ \rho_\eps$ and $\tilde{\rho}_\eps $ are moderate, so that $ \rho, \tilde{\rho}\in \gs(K) $ are well-defined. From (\ref{fr}) we get $ k \cdot \rho(k) = k \cdot \tilde{\rho}(k) $ for each $ k \in K^\bullet $. Therefore, $ \rho(k)=\tilde{\rho}(k) $ for each $ k\in K^\bullet $ invertible. Corollary \ref{invcor} implies that $ \rho=\tilde{\rho} $ and hence $ \rho(h)=r(x,h)=\tilde{\rho}(h)=\tilde{r}(x,h) $ for each $ (x,h) \in \thick{U}^\bullet $. Therefore, $ r=\tilde{r} $ once again by Corollary \ref{invcor}.
\end{proof}
\section{Conclusions}

The results we proved show that the sharp topology is not as far from the Euclidean one as it may seem at first glance. Indeed, we can say it is the natural generalization of the ordinary topology on $ \R $ including infinitesimal neighbourhoods. We have seen that this necessarily entails certain intuitively clear consequences, and that using the generalized absolute value we have instruments to work with the sharp topology which are formally very similar to the classical ones.

In the same line of ideas, the Fermat-Reyes theorem shows that the behaviour of Colombeau generalized functions can be considered formally sufficiently near to that of ordinary smooth functions.

One can pursue this point of view further and consider generalized functions as certain types of set-theoretical maps defined on, and with values in, subsets of generalized numbers. In particular, we are interested in maps obtained, like for $ \gs(\Omega) $ and $ \gs_\tau(\Omega) $, from nets $ (u_\eps) $ of ordinary smooth functions. In this case, the analogous Fermat-Reyes theorem becomes the key instrument to develop the differential calculus of this type of {\em generalized smooth maps}. This will be the subject of future works.


\end{document}